\voffset=-0.0in
\hoffset=-0.0in
\documentclass[12pt]{article}
\usepackage{amssymb, amsmath}
\setlength{\oddsidemargin}{.0in}
\setlength{\evensidemargin}{.0in}
\setlength{\textheight}{8.5in}
\setlength{\textwidth}{6.6in}

\newcommand{\qed}{\hfill \rule{2.5mm}{2.5mm}}

\newcommand{\N}{{\mathbb N}}

\begin{document}
\newtheorem{thm}{Theorem}[section]
\newtheorem{defs}[thm]{Definition}
\newtheorem{lem}[thm]{Lemma}
\newtheorem{cor}[thm]{Corollary}
\newtheorem{prop}[thm]{Proposition}
\renewcommand{\theequation}{\arabic{section}.\arabic{equation}}
\newcommand{\newsection}[1]{\setcounter{equation}{0} \section{#1}}
%%%%%%%%%%%%% title %%%%%%%%%%%%%%%%%%%%%%%%%%%%%%%
\title{Strong sequential completeness of the natural domain of a conditional expectation operator in Riesz spaces
      \footnote{{\bf Keywords:} Strong completeness; Riesz spaces; conditional expectation operators.\
      {\em Mathematics subject classification (2010):} 46B40; 60F15; 60F25.}}
%%%%%%%%%%%%%%%%%%%%%%%%%%%%%%%%%%%%%%%%%%%%%%
\author{Wen-Chi Kuo\footnote{Supported in part by  NRF grant number CSUR160503163733.},
David F. Rodda \& Bruce A. Watson \footnote{Supported in part by the Centre for Applicable Analysis and
Number Theory and by NRF grant IFR170214222646 with grant no. 109289.} \\
School of Mathematics\\
University of the Witwatersrand\\
Private Bag 3, P O WITS 2050, South Africa }
\maketitle
%%%%%%%%%%%%%%%%%%%% abstract %%%%%%%%%%%%%%%%%%%%%%
\abstract{
Strong convergence and convergence in probability were generalized to the setting of a Riesz space with conditional expectation operator, $T$, in [{{\sc Y. Azouzi, W.-C. Kuo, K. Ramdane, B. A. Watson}, {Convergence in Riesz spaces with conditional expectation operators}, {\em Positivity}, {\bf 19} {(2015), 647-657}}] as $T$-strong convergence and convergence in $T$-conditional probability, respectively. Generalized $L^{p}$ spaces for the cases of $p=1,2,\infty$,  were discussed in the setting of Riesz spaces as $\mathcal{L}^{p}(T)$ spaces in [{{\sc C. C. A. Labuschagne, B. A. Watson}, {Discrete stochastic integration in Riesz spaces}, {\em Positivity}, {\bf 14} {(2010), 859-875}}].
An $R(T)$ valued norm, for the cases of $p=1,\infty,$ was introduced on these spaces in 
[{{\sc W. Kuo, M. Rogans, B.A. Watson}, {Mixing processes in Riesz spaces}, {\em Journal of Mathematical Analysis and Application}, {\bf 456} {(2017), 992-1004}}]
where it was also shown that $R(T)$ is a universally complete $f$-algebra and that these spaces are $R(T)$-modules.
In [{{\sc Y. Azouzi, M. Trabelsi}, {$L^p$-spaces with respect to conditional expectation on Riesz spaces}, {\em Journal of Mathematical Analysis and Application}, {\bf 447} {(2017), 798-816}}] functional calculus was used to consider  $\mathcal{L}^{p}(T)$  for $p\in (1,\infty)$.
In this paper we prove the strong sequential completeness of the space $\mathcal{L}^{1}(T)$, the natural domain of the conditional expectation operator $T$,
and the strong completeness of  $\mathcal{L}^{\infty}(T)$. 
}
%%%%%%%%%%%%%%%%%%%%%%%%%%%%%%%%%%%%%%%%%%%%%%
\parindent=0in
\parskip=.2in
%%%%%%%%%%%%%%% introduction %%%%%%%%%%%%%%%%%%%%%%%%%%

\section{Introduction}
%%%%%%%%%%%%%%%%%%%%

Strong convergence and convergence in probability were generalized to Dedekind complete Riesz spaces with a conditional expectation operator in \cite{AKRW} as $T$-strong convergence and $T$-conditional convergence in conditional probability, respectively. Generalized $L^{p}$ spaces for $p=1,2,\infty$ were discussed in the setting of Riesz spaces as $\mathcal{L}^{p}(T)$ spaces in \cite{LW}. An $R(T)$ valued norm, for the cases of $p=1,\infty,$ was introduced on the $\mathcal{L}^{p}(T)$  spaces in \cite{KRW} where it was also shown that $R(T)$ is a universally complete $f$-algebra and that these spaces are $R(T)$-modules.
More recently, in \cite{AT}, the $\mathcal{L}^{p}(T)$, for $p\in (1,\infty)$, spaces were considered. We also refer the reader to \cite{JHVDW} for an interesting study of sequential order convergence in 
vector lattices using convergence structures and filters, and to \cite{Berberian} for the well known proof of the strong sequential completeness of $L^1(\Omega,\mathcal{F},\mu)$.

In this paper we prove the strong sequential completeness of the natural domain,  $\mathcal{L}^{1}(T)$, of the Riesz space conditional expectation operator $T$, i.e. that each strong Cauchy sequence in $\mathcal{L}^{1}(T)$ converges strongly in $\mathcal{L}^{1}(T)$.
The term strong here means  with respect to the vector valued norm induced by the conditional expectation operator $T$ in the given space. These results can be extended to the convergence of strong Cauchy nets which contain a sequence as a subnet. We conclude by showing the strong completeness of $\mathcal{L}^{\infty}(T)$, i.e. that every strong Cauchy net in $\mathcal{L}^{\infty}(T)$ is strongly convergent. 

\section{Preliminaries}
%%%%%%%%%%%%%%%%%%%%

A conditional expectation operator, $T$, on a Dedekind complete Riesz space, $E$, with weak order unit, say $e$,
is a positive order continuous projection which maps weak order units to weak order units and has $R(T)$ a Dedekind complete Riesz subspace of $E$, see \cite{KLW2}. In addition we assume in this paper that $T$ is strictly positive, in that if $v\in E_+$ with $v\ne 0$ then  $Tv\ne 0$ ($Tv\ge 0$ as $T$ is positive). This last condition is required for both the construction of the $T$-universal completion of $E$, i.e. the natural domain,  $\mathcal{L}^{1}(T)$, of $T$ and so that the mapping $f\mapsto T|f|$ defines an $R(T)$ valued norm on  $\mathcal{L}^{1}(T)$.

The Riesz space 
$\mathcal{L}^1(T)$ is defined to be the $T$-universal completion of $E$ or natural domain of $T$, see \cite{dePagteretal} and \cite{KLW2}. We recall that $T$ has a unique extension to $\mathcal{L}^1(T)$ as a conditional expectation operator. In particular $\mathcal{L}^1(T)$ is characterized by the property that if $(x_\alpha)$ is an upward directed net in $\mathcal{L}^1(T)$ with $(Tx_\alpha)$ bounded in $E^u$ (the universal completion), then $(Tx_\alpha)$ is order convergent in $\mathcal{L}^1(T)$.

We recall from \cite{KRW} that in $\mathcal{L}^1(T)$, $R(T)$ is a universally complete $f$-algebra and that  $\mathcal{L}^1(T)$ is an  $R(T)$-module. It thus makes sense, as was done in   \cite{KRW}, to define an $R(T)$-valued norm on $\mathcal{L}^1(T)$ by
$\|f\|_{T,1}:=T|f|$. This norm takes its values in $R(T)^+$, is homogeneous with respect to multiplication by elements of $R(T)^+$, is strictly positive and obeys the triangle inequality. For more details on this norm we refer the reader to \cite{KRW}. Convergence with respect to this norm was called $T$-strongly convergence in \cite{AKRW} where various of its properties were studied in relation to other modes of convergence.  

The other space that will be of interest in this work is 
$$\mathcal{L}^\infty(T):=\{f\in \mathcal{L}^1(T)\,:\,|f|\le g \mbox{ for some } g\in R(T)\}$$
with $R(T)$-valued norm
$$\|f\|_{T,\infty}:=\inf\{g\in R(T)\,:\, |f|\le g\}.$$
 We refer the reader to  \cite{KRW} for more details.
 
 The following lemma will assist in the proof of strong sequential completeness.
 
\begin{lem}\label{Lem1}
 Let $(h_{n})$ be a sequence in $\mathcal{L}^{1}(T)$ with 
$\begin{displaystyle}s:=\sum_{n=1}^{\infty}T|h_{n}|\end{displaystyle}$ 
order convergent in the universal completion of $\mathcal{L}^{1}(T)$, then the summation 
$\begin{displaystyle}\sum_{n=1}^{\infty}h_{n}\end{displaystyle}$ 
is order convergent in $\mathcal{L}^{1}(T)$.
\end{lem}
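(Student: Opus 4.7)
My plan is to reduce to the monotone case by splitting each term into positive and negative parts, and then invoke the defining property of the natural domain $\mathcal{L}^1(T)$ twice.

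First, I would write $h_n=h_n^+-h_n^-$ with $h_n^\pm\in\mathcal{L}^1(T)_+$, noting that $Th_n^\pm\le T|h_n|$. The hypothesis that $s=\sum_{n=1}^\infty T|h_n|$ is order convergent in the universal completion gives, in particular, that the sequences of partial sums
\[
\sigma_n^\pm:=\sum_{k=1}^n Th_k^\pm=T\!\left(\sum_{k=1}^n h_k^\pm\right)
\]
are increasing and bounded above by $s$, hence bounded in $E^u$. Setting
\[
A_n:=\sum_{k=1}^n h_k^+,\qquad B_n:=\sum_{k=1}^n h_k^-,
\]
these are upward directed sequences in $\mathcal{L}^1(T)_+$ with $(TA_n)$ and $(TB_n)$ bounded in $E^u$.

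Next, I would apply the characterization of $\mathcal{L}^1(T)$ as the natural domain of $T$: any upward directed net in $\mathcal{L}^1(T)$ whose image under $T$ is bounded in $E^u$ is order convergent in $\mathcal{L}^1(T)$. Applied to $(A_n)$ and $(B_n)$, this yields $A,B\in\mathcal{L}^1(T)$ with $A_n\uparrow A$ and $B_n\uparrow B$ in order in $\mathcal{L}^1(T)$.

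Finally I would combine these to handle the (non-monotone) partial sums $S_n:=\sum_{k=1}^n h_k=A_n-B_n$. From
\[
|S_n-(A-B)|\le(A-A_n)+(B-B_n)
\]
and $(A-A_n)\downarrow 0$, $(B-B_n)\downarrow 0$, it follows that $|S_n-(A-B)|\downarrow 0$, so $S_n\to A-B$ in order in $\mathcal{L}^1(T)$, as required.

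The main obstacle is really just setting up the framework correctly: once the positive/negative decomposition is in place, the work is done by the natural domain property, which is exactly tailored to convert $L^1$-type boundedness (here encoded by the order convergence of $s$ in $E^u$) into order convergence of the underlying monotone sequence. A small technical point to check is that $\mathcal{L}^1(T)$, being a Riesz space, contains $h_n^\pm$ and is closed under finite sums, so that $A_n,B_n\in\mathcal{L}^1(T)$ and the natural-domain hypothesis is legitimately applicable.
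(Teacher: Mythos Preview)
Your proof is correct and follows essentially the same route as the paper: split each $h_n$ into positive and negative parts, observe that the resulting monotone partial sums have $T$-images bounded by $s$ in $E^u$, invoke the $T$-universal completeness of $\mathcal{L}^1(T)$ to obtain their limits, and subtract. One tiny slip: $|S_n-(A-B)|$ need not itself be decreasing, but it is dominated by the decreasing null sequence $(A-A_n)+(B-B_n)$, which is exactly what order convergence requires.
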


\begin{proof}
Let $\begin{displaystyle}s_n^\pm=\sum_{i=1}^n h_{i}^\pm\end{displaystyle}$, then the partial sums $s_n$ of $\begin{displaystyle}\sum_{n=1}^{\infty}h_{n}\end{displaystyle}$ are given by $s_n=s_n^+-s_n^-$. Here $(s_n^\pm)$ are  increasing sequences with 
$$Ts_n^\pm=\sum_{i=1}^n Th_{i}^\pm\le\sum_{i=1}^n T|h_{i}|\le s.$$
The $T$-universal completeness of $\mathcal{L}^{1}(T)$ now allows us to conclude
that $(s_n^\pm)$ are convergent in  $\mathcal{L}^{1}(T)$  to limits, say $h^\pm$.
Setting $h=h^+-h^-$ we have that
$$s_n=s_n^+-s_n^-\to h^+-h^-=h\in \mathcal{L}^{1}(T)$$
in order as $n\to \infty$.\qed
\end{proof}

\begin{defs}
	We say that a net  $(f_\alpha)$ in ${\mathcal{L}}^p(T)$, $p=1,\infty$, is a strong Cauchy net if 
	$$v_\alpha:=\sup_{\beta,\gamma\ge \alpha}\|f_\beta-f_\gamma\|_{T,p}$$
	is eventually defined and has order limit zero.
\end{defs}
%%%%%%%%%%%%%%%%%%%%%%%%%%%%%%%%%%%
\section{Strong sequential completeness of ${\mathcal{L}}^1(T)$}
We now show that ${\mathcal{L}}^1(T)$ is strongly sequentially complete - i.e. that for 
every sequence $(f_n)$ in ${\mathcal{L}}^1(T)$ with 
$\sup_{i,j\ge n}T|f_i-f _j| \downarrow 0$ there is $f\in {\mathcal{L}}^1(T)$
so that $T|f_n-f|\to 0$ in order as $n\to\infty$.

\begin{thm}
	$\mathcal{L}^{1}(T)$ is strongly sequentially complete. 
\end{thm}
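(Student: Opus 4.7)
Given a strong Cauchy sequence $(f_n) \subset \mathcal{L}^1(T)$, set $v_n := \sup_{i,j \geq n} T|f_i - f_j|$, so that $(v_n)$ is eventually defined in $R(T)$ and $v_n \downarrow 0$ in order. My plan is to adapt the classical $L^1$ completeness argument to the $R(T)$-valued norm: extract a fast subsequence $(f_{n_k})$, use Lemma \ref{Lem1} to convert absolute $T$-summability of the consecutive differences into order convergence of the telescoping sum in $\mathcal{L}^1(T)$, and then promote subsequential convergence to full-sequence strong convergence using the Cauchy condition.

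The first step---and the main obstacle---is the subsequence extraction. I want $n_1 < n_2 < \cdots$ such that $\sum_{k=1}^\infty T|f_{n_{k+1}} - f_{n_k}|$ is order convergent in the universal completion of $\mathcal{L}^1(T)$. Since $T|f_{n_{k+1}} - f_{n_k}| \leq v_{n_k}$ holds automatically, it suffices to arrange that $\sum_k v_{n_k}$ be order bounded. Here I exploit that $R(T)$ is a universally complete $f$-algebra: this allows me to extract from the order-null sequence $(v_n)$ a subsequence which converges to $0$ relatively uniformly, producing some $u \in R(T)^+$ and indices $n_k$ with $v_{n_k} \leq 2^{-k} u$. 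Consequently $\sum_{k=1}^K v_{n_k} \leq u$ for every $K$, so the series is order convergent in $R(T)$, hence in the universal completion of $\mathcal{L}^1(T)$.

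Applying Lemma \ref{Lem1} with $h_k := f_{n_{k+1}} - f_{n_k}$ then produces an $f \in \mathcal{L}^1(T)$ with $f = f_{n_1} + \sum_k h_k$ in order, i.e.\ $f_{n_k} \to f$ in order. To upgrade, fix $N$ and consider $n, n_k \geq N$: then $T|f_n - f_{n_k}| \leq v_N$, and letting $k \to \infty$, the order continuity of $T$ combined with $|f_n - f_{n_k}| \to |f_n - f|$ in order gives $T|f_n - f| \leq v_N$ for every $n \geq N$. Since $v_N \downarrow 0$ in order, this forces $T|f_n - f| \to 0$ in order, which is the desired strong convergence of $(f_n)$ to $f$.

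The delicate point is the existence of the regulator $u$ in the second paragraph: from $v_n \downarrow 0$ alone in an arbitrary Dedekind complete Riesz space one cannot in general find a single $u \in R(T)^+$ dominating a geometric subsequence of $(v_n)$, and it is precisely the universal completeness of $R(T)$ that provides the necessary room. Once the regulator has been secured, Lemma \ref{Lem1} and the Cauchy-plus-order-continuity upgrade are essentially routine.
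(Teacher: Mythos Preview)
The gap is in your second paragraph: the extraction of a regulator $u\in R(T)^+$ and indices $n_k$ with $v_{n_k}\le 2^{-k}u$. Since $(v_n)$ is decreasing, this would in fact force the \emph{entire} sequence $(v_n)$ to converge to $0$ relatively uniformly with regulator $u$. So your claim amounts to: in a universally complete $f$-algebra, every decreasing order-null sequence converges relatively uniformly. Universal completeness does not give this. Take $R(T)=\mathbb{R}^I$ where $I$ is the set of strictly increasing sequences in $\N$ (this is a legitimate $R(T)$: let $E=\mathbb{R}^I$ and $T=\mathrm{Id}$). For $S=(s_1<s_2<\dots)\in I$ set $v_n(S)=1/(1+|\{k:s_k\le n\}|)$. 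Then $v_n\downarrow 0$ coordinatewise, hence in order. But given \emph{any} subsequence $(n_k)$, choose $S=(n_k)\in I$; then $v_{n_k}(S)=1/(k+1)$, so $2^k v_{n_k}(S)\to\infty$ and no $u\in\mathbb{R}^I$ can satisfy $v_{n_k}\le 2^{-k}u$. Hence $\sum_k v_{n_k}$ is not order bounded for any choice of $(n_k)$, and Lemma~\ref{Lem1} cannot be invoked on the telescoping differences.

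This is precisely why the paper does not pass to a subsequence. Instead of selecting a single global index $n_k$ at stage $k$, it localizes via the band projections $P_{j,n}:=P_{(2^{-j}e-v_n)^+}$: on the band where $v_n$ has already dropped below $2^{-j}e$ one uses $f_n$, and these pieces are reassembled (using lateral completeness, which \emph{is} what universal completeness buys) into an auxiliary sequence $g_j=\sum_n(P_{j,n}-P_{j,n-1})f_{\max\{j,n\}}$. This produces the geometric estimate $T|g_j-g_{j+1}|\le 2^{-j}e$ with the fixed regulator $e$, after which Lemma~\ref{Lem1} applies. Your final upgrade step (from the limit of the auxiliary/sub-sequence to the full sequence via $T|f_n-f|\le v_N$ and order continuity of $T$) is correct and is exactly how the paper finishes; the missing ingredient is the band-projection construction in place of the subsequence extraction. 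If one adds the hypothesis that $R(T)$ has the countable sup property (e.g.\ $R(T)=L^0$ of a $\sigma$-finite measure), your argument does go through, but the theorem as stated needs the paper's construction.
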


\begin{proof}
Let $(f_n)$ be a strong $T$-Cauchy sequence in $\mathcal{L}^{1}(T)$.
From the definition of a strong Cauchy sequence, we can define
 $$v_n:=\sup_{r,s\geq n}T|f_r-f_s|$$ where the sequence $(v_n)\subset R(T)$ decreases with infimum zero. 
As $e, v_n\in R(T)$, it follows that $(\frac{1}{2^{j}}e-v_{n})^{+}\in R(T)$ and hence
the band projections $P_{j,n}:=P_{(\frac{1}{2^{j}}e-v_{n})^{+}}, j,n\in\mathbb{N},$
commute with $T$, see \cite{KLW2}.
For $n=0$ define $P_{j,n}=0$. 
We observe that $P_{j,n}$ is increasing in $n$ and decreasing in $j$. 
In particular, $\begin{displaystyle}\lim_{n\rightarrow\infty}P_{j,n}=I\end{displaystyle}$, since $v_n\downarrow 0$.
Hence $\begin{displaystyle}\sum_{n=0}^{\infty}(P_{j,n+1}-P_{j,n})=I\end{displaystyle}$ for each $j\in\mathbb{N}$.

We now construct a sequence $(g_j)\in \mathcal{L}^{1}(T)$ that is both asymptotically close to $(f_n)$ and is strongly convergent in $\mathcal{L}^{1}(T)$. 	
As band projections commute with Riesz space absolute value, we have
$$T|(P_{j,n}-P_{j,n-1})f_{\max\{j,n\}}|= (P_{j,n}-P_{j,n-1})T|f_{\max\{j,n\}}|,\quad n,j\in \N.$$ 
Here, for $m\ne n$, $(P_{j,n}-P_{j,n-1})\wedge
(P_{j,m}-P_{j,m-1})=0$
so  
\begin{align*}
\sum_{n=1}^{\infty}T|(P_{j,n}-P_{j,n-1})f_{\max\{j,n\}}|
&=\sum_{n=1}^{\infty}(P_{j,n}-P_{j,n-1})T|f_{\max\{j,n\}}|\\
&=\sup_{n\in\N}(P_{j,n}-P_{j,n-1})T|f_{\max\{j,n\}}|\\
&=:K\in E^u
\end{align*}
exists in the universal completion $E^u$.
 Lemma \ref{Lem1}
can now be applied to give that the summation
$$g_j=\sum_{n=1}^{\infty}(P_{j,n}-P_{j,n-1})f_{\max\{j,n\}},\quad j\in \N,$$ 
converges in order in $\mathcal{L}^{1}(T)$.

We now show that the sequence $(g_j)$ converges in $\mathcal{L}^{1}(T)$.
Consider $T|g_{j}-g_{j+1}|$. Because $\begin{displaystyle}\sum_{n=0}^{\infty}(P_{j,n+1}-P_{j,n})=I\end{displaystyle}$ for each $j\in\mathbb{N}$, we have that 
\begin{eqnarray*}
	T|g_{j}-g_{j+1}|& =& T\left|\sum_{n=1}^{\infty}(P_{j,n}-P_{j,n-1})f_{\max\{j,n\}}-\sum_{m=1}^{\infty}(P_{j+1,m}-P_{j+1,m-1})f_{\max\{j+1,m\}}\right|\\
	& = &
	T\left|\sum_{n=1}^{\infty}\sum_{m=1}^{\infty}(P_{j+1,m}-P_{j+1,m-1})(P_{j,n}-P_{j,n-1})(f_{\max\{j,n\}}-f_{\max\{j+1,m\}})\right|\\
	& = &
	\sum_{n=1}^{\infty}\sum_{m=1}^{\infty}(P_{j+1,m}-P_{j+1,m-1})(P_{j,n}-P_{j,n-1})T|f_{\max\{j,n\}}-f_{\max\{j+1,m\}}|.
	\end{eqnarray*}
Here we have used that $$(P_{j+1,m}-P_{j+1,m-1})(P_{j,n}-P_{j,n-1})\wedge
(P_{j+1,x}-P_{j+1,x-1})(P_{j,y}-P_{j,y-1})=0$$ for $(m,n)\ne (x,y)$.

For $m\ge n$ we have
\begin{align*}
(P_{j,n}-P_{j,n-1})T|f_{\max\{j,n\}}-f_{\max\{j+1,m\}}|
&\le (P_{j,n}-P_{j,n-1})v_n\\ &\le \frac{1}{2^j}(P_{j,n}-P_{j,n-1})e
\end{align*}
while for $m< n$ we have
\begin{align*}
(P_{j+1,m}-P_{j+1,m-1})T|f_{\max\{j,n\}}-f_{\max\{j+1,m\}}|
&\le (P_{j+1,m}-P_{j+1,m-1})v_m\\
&\le \frac{1}{2^{j+1}}(P_{j+1,m}-P_{j+1,m-1})e.
\end{align*}
Thus
\begin{eqnarray*}
	T|g_{j}-g_{j+1}|
	\le 
	\frac{1}{2^j}\sum_{n=1}^{\infty}\sum_{m=1}^{\infty}(P_{j+1,m}-P_{j+1,m-1})(P_{j,n}-P_{j,n-1})e
	=\frac{1}{2^j}e
	\end{eqnarray*}
and the summation
$\begin{displaystyle}\sum_{j=1}^\infty T|g_{j}-g_{j+1}|\end{displaystyle}$
is $e$-uniformly (and hence order) convergent. Application of Lemma \ref{Lem1} gives that
the summation 
$\begin{displaystyle}\sum_{j=1}^\infty (g_{j}-g_{j+1})\end{displaystyle}$
is order convergent, which is equivalent to the order limit
$\begin{displaystyle}\lim_{j\to\infty}(g_1-g_{j+1})\end{displaystyle}$
existing. We can thus define $g$ to be the order limit of the sequence $(g_j)$ in $\mathcal{L}^1(T)$. Order continuity of $T$ now gives that $\lim_{n\to\infty} T|g_n-g|=0$ and that $(g_n)$ converges strongly to $g$ in $\mathcal{L}^1(T)$.

From the order continuity of $T$ and the order convergence of $(g_n)$ to $g$ we have that $T|g_n-g|\to 0$ in order. Hence to show 
that $g$ is the strong limit of the $(f_n)$
it suffices prove that $T|g_n-f_n|\rightarrow 0$ in order. 
As
$\begin{displaystyle}\sum_{n=0}^{\infty}(P_{j,n+1}-P_{j,n})=I\end{displaystyle}$ we have
\begin{eqnarray*}
 g_j-f_j &=& \sum_{n=1}^{\infty}(P_{j,n}-P_{j,n-1})(f_{\max\{j,n\}}-f_j)\\
&=&\sum_{n=1}^{j}(P_{j,n}-P_{j,n-1})(f_j-f_j)
+\sum_{n=j+1}^{\infty}(P_{j,n}-P_{j,n-1})(f_n-f_j)\\
&=& \sum_{n=j+1}^{\infty}(P_{j,n}-P_{j,n-1})(f_n-f_j).
\end{eqnarray*}
The order continuity of $T$ gives
\begin{eqnarray*}
T|g_j-f_j| 
&\le& \sum_{n=j+1}^{\infty}(P_{j,n}-P_{j,n-1})T|f_n-f_j|\\
&\le& \sum_{n=j+1}^{\infty}(P_{j,n}-P_{j,n-1})v_j\\
&\le& v_j
\end{eqnarray*}
and $v_j\downarrow 0$ as $j\to\infty$. Thus $T|f_j-g|\to 0$ in order as $n\to\infty$.
\qed
\end{proof}

These results extended to the convergence of strong Cauchy nets which contain a sequence as a subnet.
More can be said in the case of $p=\infty$, as see in the following section.
%%%%%%%%%%%%%%%%%%%%%%%%%%%%%%%%%%%%%%%%
\section{Strong completeness of $\mathcal{L}^\infty(T)$}
For the case of $\mathcal{L}^\infty(T)$ we can prove a stronger result, i.e. that $\mathcal{L}^\infty(T)$ is strongly complete. The proof, unfortunately, cannot be extended to 
$\mathcal{L}^p(T)$ for $p\in [1,\infty)$.

\begin{thm}
Each strong Cauchy net in  $\mathcal{L}^\infty(T)$  is strongly convergent in  $\mathcal{L}^\infty(T)$.
\end{thm}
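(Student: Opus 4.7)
The strategy is to mimic the classical proof that a Cauchy sequence in $L^\infty$ converges pointwise almost everywhere, but carried out at the level of order $\limsup$ and $\liminf$ in the Dedekind complete Riesz space $\mathcal{L}^1(T)$. The crucial fact that makes the $p=\infty$ argument work (and the $p<\infty$ version fail) is a pointwise bound by the norm: the set $\{g\in R(T):|f|\le g\}$ is downward directed (closed under finite meets in $R(T)$), so by Dedekind completeness of $R(T)$ the infimum is attained and $|f|\le \|f\|_{T,\infty}$ holds. Applied to differences, the strong Cauchy condition yields $|f_\beta-f_\gamma|\le v_\alpha$ whenever $\beta,\gamma\ge\alpha$, which is a genuine order bound on the elements, not merely on their images under $T$.

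Given this, I would fix an index $\alpha_0$ at which $v_{\alpha_0}$ is defined. Then for every $\beta\ge\alpha_0$ we have $|f_\beta|\le |f_{\alpha_0}|+v_{\alpha_0}=:g_0\in R(T)$, so the tail lies in the order interval $[-g_0,g_0]$. Dedekind completeness of $\mathcal{L}^1(T)$ gives, for each $\alpha\ge\alpha_0$, the existence in $\mathcal{L}^1(T)$ of
\[
h_\alpha:=\sup_{\beta\ge\alpha} f_\beta,\qquad \ell_\alpha:=\inf_{\beta\ge\alpha} f_\beta,
\]
and a direct computation gives
\[
0\le h_\alpha-\ell_\alpha=\sup_{\beta,\gamma\ge\alpha}(f_\beta-f_\gamma)\le v_\alpha.
\]
The net $(h_\alpha)$ is decreasing, $(\ell_\alpha)$ is increasing, and both are order bounded in $[-g_0,g_0]$, so their order limits $h:=\inf_\alpha h_\alpha$ and $\ell:=\sup_\alpha \ell_\alpha$ exist in $\mathcal{L}^1(T)$; since $v_\alpha\downarrow 0$, we have $h-\ell=0$. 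Set $f:=h=\ell$; then $|f|\le g_0\in R(T)$, so $f\in\mathcal{L}^\infty(T)$.

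To close the argument, note that for every $\beta\ge\alpha$ both $f_\beta$ and $f$ lie in $[\ell_\alpha,h_\alpha]$, hence $|f-f_\beta|\le h_\alpha-\ell_\alpha\le v_\alpha$. Since $v_\alpha\in R(T)$ is itself a candidate in the infimum defining $\|f-f_\beta\|_{T,\infty}$, we conclude $\|f-f_\beta\|_{T,\infty}\le v_\alpha$, and this tends to $0$ in order, giving the required strong convergence $f_\alpha\to f$ in $\mathcal{L}^\infty(T)$. The main subtlety is to justify that the net suprema and infima $h_\alpha,\ell_\alpha$ actually exist in $\mathcal{L}^1(T)$ (which needs the order boundedness of the tail together with Dedekind completeness) and that the infimum defining $\|\cdot\|_{T,\infty}$ is attained; these are exactly the two points whose analogues fail for $p<\infty$, where the Cauchy condition only controls $T|f_\beta-f_\gamma|$ rather than $|f_\beta-f_\gamma|$ itself, so no order-bounded interval containing the tail of the net is available.
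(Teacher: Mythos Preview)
Your argument is essentially identical to the paper's: both use the pointwise bound $|f_\beta-f_\gamma|\le v_\alpha$ to obtain eventual order boundedness, form the order $\limsup$ and $\liminf$, show they coincide via $h_\alpha-\ell_\alpha\le v_\alpha\downarrow 0$, and then bound $|f-f_\beta|\le v_\alpha$ to conclude strong convergence. One minor slip: $g_0:=|f_{\alpha_0}|+v_{\alpha_0}$ lies in $\mathcal{L}^\infty(T)$ but not necessarily in $R(T)$ (since $|f_{\alpha_0}|$ need not be in $R(T)$); replace $|f_{\alpha_0}|$ by $\|f_{\alpha_0}\|_{T,\infty}\in R(T)$ and the rest goes through unchanged.
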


\begin{proof}
Let $(f_\alpha)$ be a  strong Cauchy net in  $\mathcal{L}^\infty(T)$, then
eventually 
\begin{align*}
v_\alpha:=\sup_{\beta,\gamma\ge\alpha} \|f_\beta-f_\gamma\|_{T,\infty}
=\inf\{g\in R(T)\,:\,|f_\beta-f_\gamma|\le g \mbox{ for all } \beta,\gamma\ge\alpha\}
\end{align*}  
exists as an element of $R(T)$ and $v_\alpha\downarrow 0$. Hence eventually $|f_\beta-f_\gamma|\le v_\alpha$, for $\beta,\gamma\ge \alpha$, and the Cauchy net $(f_\alpha)$ is eventually bounded. We can thus define
$\underline{f}:=\liminf_\alpha f_\alpha$, $\overline{f}:=\limsup_\alpha f_\alpha$ in
 $\mathcal{L}^\infty(T)$.
Now 
$$0\le\overline{f}-\underline{f}
=\lim_\alpha(\sup_{\beta\ge\alpha}f_\beta-\inf_{\gamma\ge\alpha}f_\gamma)
=\lim_\alpha\sup_{\beta,\gamma\ge\alpha}(f_\beta-f_\gamma)\le \lim_\alpha v_\alpha=0.$$
So $\overline{f}=\underline{f}$ and we can set $f:=\overline{f}=\underline{f}$ with $(f_\alpha)$ converging in order $f$, see \cite{AKRW}.
However
$|f_\beta-f_\gamma|\le v_\alpha$ for all $\beta,\gamma\ge\alpha$, so taking the order limit in the index $\gamma$ we have 
$|f_\alpha-f|\le  v_\alpha$ and hence
$\|f_\alpha-f\|_{T,\infty}\le  v_\alpha\downarrow 0$.
\qed
\end{proof}
%%%%%%%%%%%%%%%%%%%%%%%%%%%%%%%%%%%%%%%%%%

\end{document}